\newcommand     {\B}    {{\mathcal B}}
\newcommand     {\Q}    {{\mathcal Q}}
\newcommand     {\kk}    {{\mathbf k}}
\newcommand     {\Cset}    {{\mathbb C}}
\newcommand     {\Rset}    {{\mathbb R}}
\newcommand     {\Zset}    {{\mathbb Z}}
\newcommand     {\Tr}   {\mbox{Tr}}
\newcommand     {\id}   {\mbox{id}}
\newcommand		{\Galg}	{G_{\Cset}}
\newtheorem{thm}{Theorem}[section]
\newtheorem{lemma}[thm]{Lemma}
\newtheorem{prop}[thm]{Proposition}
\theoremstyle{definition}
\newtheorem{defn}[thm]{Definition}
\newtheorem{conj}[thm]{Conjecture}
\newtheorem{ex}[thm]{Example}
\numberwithin{equation}{section}
\DeclareMathOperator{\Rep}{\mathrm{Rep}}
\DeclareMathOperator{\Span}{\mathrm{Span}}
\DeclareMathOperator{\Spec}{\mathrm{Spec}}
\begin{document}

\title[Connectedness and irreducibility of CQGs]{Connectedness and irreducibility\\ of compact quantum groups}
\author[A.~D'Andrea]{Alessandro D'Andrea}
\address{Dipartimento di Matematica ``Guido Castelnuovo'', Sapienza Universit\`a di
Roma\\ P.le Aldo Moro 5, 00185 Roma, Italy}
\email{dandrea@mat.uniroma1.it}

\author[C.~Pinzari]{Claudia Pinzari}
\address{Dipartimento di Matematica ``Guido Castelnuovo'', Sapienza Universit\`a di
Roma\\ P.le Aldo Moro 5, 00185 Roma, Italy}
\email{pinzari@mat.uniroma1.it}

\author[S.~Rossi]{Stefano Rossi}
\address{Dipartimento di Matematica, Università di Roma Tor Vergata, Via della Ricerca Scientifica 1, 00133 Roma, Italy}
\email{rossis@mat.uniroma2.it}

\keywords{Compact quantum group, connectedness, irreducibility.}
\subjclass[2010]{81R50, 81R60}

\begin{abstract}
We show that a natural notion of irreducibility implies connectedness in the Compact Quantum Group setting. We also investigate the converse implication and show it is related to Kaplansky's conjectures on group algebras.
\end{abstract}

\maketitle


\section{Introduction}

  
Among locally compact quantum groups, whose general theory is admittedly far from complete,  compact quantum groups provide a felicitous class of examples for which a satisfactory theory does exist. This is particularly so when
their representations are looked at. Indeed, the category of finite-dimensional representations of a  given compact quantum group, already at the classical level, displays so rich a structure as to embody virtually any information on the group itself.
To name but few important topological aspects, connectedness \cite{Wang}, local disconnectedness \cite{CDPR}, and topological dimension \cite{DPR} are all properties that the representation category keeps track of very precisely.
Algebraic properties of the group may also be recast in terms of the corresponding category. Notably, the notion of subgroup and its normality, as well as homomorphic images are a case in point.

Analogies between classical and quantum compact groups, however, are far too many to be mentioned at all.  
Even so, cocommutative quantum groups lend themselves to a  more immediate grasp. For instance,
as (classical) Abelian compact groups  only feature irreducible representations of dimension one, so the irreducible
representations of a cocommutative compact quantum group are all still one-dimensional. Furthermore,
the tensor structure of the category corresponds to the group structure of the  dual group, and the conjugate of an
irreducible representation is but its inverse.  
Many of the above topological properties may then be translated into the algebraic language of discrete groups, thought of as dual objects, thus leading back to important yet difficult long-standing conjectures that have risen from group theory over the years, as we shall see in the subsequent sections. Gromov's characterization of groups of polynomial growth, counterexamples to the Burnside problem, Kaplansky's conjectures on group algebras, and the search for a description of groups having Noetherian group algebras may all be interpreted as special cases of geometric and topological issues in compact quantum group theory.\\

In this respect, this paper aims to shed some light on the relation between irreducibility, understood in its algebraic geometrical sense, and connectedness for compact quantum groups. We show that irreducibility always implies connectedness, whereas the inverse implication is at least as hard as tackling Kaplansky's conjecture on the absence of zero divisors in group algebras of torsionless groups. Needless to say, in the classical case connectedness and  irreducibility are equivalent notions, due to the one-to-one correspondence between compact Lie groups and their complexification into reductive groups. Notice that Hopf algebras which are domains \cite{domain}, or more generally prime rings \cite{prime}, are commonly considered for classification purposes.


\section{Compact quantum groups}

The notion of compact quantum group (CQG) in the $C^*$-algebra formalism  has been developed by Woronowicz \cite{WoroCMP}, see also \cite{MaesVanDaele,NTbook} for a thorough account of the subject. A {\em compact quantum group} is a pair $G=(Q, \Delta)$ where $Q$ is a unital $C^*$-algebra, whose unit is denoted by $1$,  and $\Delta$ is a coassociative unital $^*$-homomorphism
\[\Delta: Q\to Q\otimes Q\]
such that the products $(1\otimes Q)\Delta(Q)$ and $(Q\otimes 1)\Delta(Q)$ are dense in the minimal tensor product $Q\otimes Q$, where $1\otimes Q:=\{1\otimes x: x\in Q\}$ and $Q\otimes 1:=\{x\otimes 1: x\in Q\}$.

The basic and motivating example is given by the algebra $C(G)$ of continuous functions on a compact topological group $G$. In this case the tensor product
$C(G)\otimes C(G)$ is isomorphic with $C(G\times G)$ and the natural coproduct is induced by the group operation itself, i.e.,  $\Delta(f)(g,h)\doteq f(gh)$, for every $f\in C(G) $ and $(g,h)\in G\times G$.
More importantly,  every commutative example is of this form. 
Notice that the structure of $Q$ is thought of as dual to that of $G=(Q, \Delta)$, which is the main object of investigation. One will typically describe properties of $Q$ in terms of a language which is better suited to the structure of $G$. For instance, when $Q$ is commutative, we will say that $G$ is {\em classical}.
  
 A finite-dimensional (unitary) {\em representation} of $G$ is defined as a unitary element $u\in{\B}(H)\otimes Q$, where $H$ is a finite dimensional Hilbert space, satisfying $\Delta(u_{\xi, \eta})=\sum_r u_{\xi, e_r}\otimes u_{e_r,\eta}$. Here $u_{\xi, \eta}=(\xi^*\otimes 1) u (\eta\otimes 1)$, where $\xi,\eta\in H$ and $(e_r)$ is an orthonormal basis of $H$, are the {\em matrix coefficients} of $u$.

As far as representation theory is concerned, an analogue of the Peter-Weyl theorem holds for compact quantum groups as well. More precisely,
the Woronowicz density theorem states that the subalgebra $\Q_G$, which is by definition the subalgebra linearly generated by matrix coefficients of representations of $G$, is dense in $Q$ with respect to its $C^*$-norm. 
Unlike the classical case,  though, the subalgebra $\Q_G$ may well fail to bear a (necessarily not complete) unique $C^*$-norm; however, any such 
 norm on $\Q_G$ can be shown to be bounded between the so-called reduced and maximal norms. The quantum group $G$ is {\em coamenable} when the reduced and maximal norm coincide. Phrased differently, the quantum group $G=(Q, \Delta)$ is coamenable when $Q$ is the only $C^*$-completion of $\Q_G$. All classical compact quantum groups are coamenable.
\subsection{Cosemisimplicity}

Representations of a compact quantum group $G$ can be made into a 
 $C^*$-tensor category with conjugates in the sense of, e.g., \cite{NTbook}. Subrepresentations, quotients, conjugates, direct sums, tensor products of representations as well as irreducible representations and intertwiners are defined in the obvious way.
Every representations can be decomposed as a direct sum of irreducible representations, in a unique way up to equivalence. Every irreducible representation is finite dimensional. We denote by $\Rep G$ the corresponding Grothendieck (fusion) ring: this is a $\Zset$-algebra endowed with an involution $*$ induced by taking dual of representations.

We have seen above that the linear span $\Q_G$ of matrix coefficients of representations is a canonical dense $^*$-subalgebra of $Q$. Furthermore, it has the structure of an honest Hopf $^*$-algebra \cite{WoroCMP, cqg}, which is {\em cosemisimple} by the abovementioned complete reducibility; representations of $G$ are the same as $\Q_G$-comodules.

Due to cosemisimplicity, $\Q_G$ has a unique {\em Haar state} $h$, which means that $h$ is a state satisfying the invariance condition $(h\otimes \id)(\Delta(a))=h(a)1=(\id\otimes h)(\Delta(a))$ for all $a\in Q$. 
It is uniquely determined by demanding that $h(1) = 1$ and that it annihilates all coefficients of non-trivial irreducible representations. The Haar state is always {\em positive} in the compact quantum group setting, which means that $h(a^* a) > 0$ for all $0 \neq a \in \Q_G$.
Quite remarkably, the Haar state is uniquely determined at the $C^*$-algebraic level of $Q$ too \cite{WoroCMP}, by only requiring
that it satisfies the invariance condition.

\subsection{Character theory}
In the classical theory,  with any finite-dimensional representation $\pi$ of a compact group $G$ one may associate 
its character $\chi^\pi$, which is the continuous function $\chi^\pi(g)\doteq {\rm Tr}(\pi(g))$, $g\in G$. 
If now $\{\pi^i, i\in I\}$ is a complete family of inequivalent irreducible representations of $G$, the set of corresponding characters $\{\chi^i, i\in I\}$
is an orthonormal system in the Hilbert space $L^2(G, {\rm m})$, where ${\rm m}$ is the Haar measure of
$G$. Unless $G$ is Abelian, however, the functions thus obtained will fail to be an orthonormal basis of $L^2(G, {\rm m})$. In fact, they are a basis for the Hilbert subspace $ZL^2(G, {\rm m})\subset L^2(G, {\rm m})$,
which is the closure in $L^2(G, {\rm m})$ of the Banach space $ZC(G):=\{f\in C(G):\, f(gh)=f(hg), \,\textrm{for every}\, g,h\in G\}$ of {\em central functions}. 

One of the features of compact quantum groups is that most of the usual character theory outlined above extends to the quantum setting. If a finite-dimensional representation is described by the unitary element $u \in{\B}(H)\otimes Q$, the corresponding character can be still defined as $\chi(u) = (\Tr \otimes \id)(u) \in \Q_G$. Associating with each finite-dimensional representation of $G$ its character sets up a ring homomorphism $\chi: \Rep G \to \Q_G$ which commutes with the corresponding $*$-involutions. Characters $\{\chi^i, i \in I\}$ corresponding to a complete family of pairwise non-isomorphic irreducible representations of $G$ continue to satisfy the usual orthonormality relations $h(\chi^i (\chi^j)^*) = \delta^{ij}$, showing that $\chi: \Rep G \to \Q_G$ is indeed injective, thus providing an embedding of $\Rep G$ inside $\Q_G$. 

To the best of our knowledge, in a general quantum framework the subspace $\Span_\Cset\langle\chi^i, i\in I\rangle$ is no longer known to be dense in $Z\Q:=\{x\in \Q\,|\, \Delta(x)=\theta(\Delta(x))\}$, where $\theta$ is the $^*$-isomorphism of $\Q\otimes \Q$ given by $\theta(x\otimes y)=y\otimes x$, $x,y\in\Q$, although this is certainly the case for Abelian compact quantum groups.

\subsection{Finite quantum groups and connectedness}

Wang extended in \cite{Wang} the notion of connectedness to the compact quantum setting: a compact quantum group $G = (Q, \Delta)$ is {\em connected} if the only finite-dimensional unital Hopf $\,^*$-subalgebra of $Q$ is the base field $\Cset$; using the dual group language, the only quotient quantum groups of $G$ are {\em finite}. One may reformulate this notion in terms of representation theory, and one of the results from \cite{CDPR} shows that $G$ is connected precisely when each nontrivial irreducible representation $u$ of $G$ requires infinitely many pairwise non-isomorphic irreducible summands to decompose all tensor powers of $u \oplus u^*$; in other words, the only {\em torsion} irreducible representation is the trivial one.

In the classical setting, if a compact topological group $G$ is not connected, then the corresponding group of connected components $G/G^\circ$ is totally disconnected and thus has nontrivial finite quotients; then each nontrivial irreducible representation of such quotients lifts to a torsion representation of $G$; the viceversa clearly holds by standard Lie representation theory.

\subsection{Finite quantum groups and semisimplicity}

When $\Q_G$ is finite dimensional, i.e., when $G$ is {\em finite}, then $\Q_G$ coincides with its completion $Q$, which is a finite-dimensional $C^*$-algebra. It is well known \cite{Takesaki} that finite-dimensional $C^*$-algebras are semisimple. A complex semisimple algebra is always a direct sum of complex matrix algebras; as the counit $\epsilon: \Q_G \to \Cset$ is a surjective ring homomorphism, one of this direct summand is isomorphic to $\Cset$.

Summing up, if $G$ is a finite quantum group, then $\Q_G$, viewed as a $*$-algebra, is isomorphic to a finite sum of complex matrix algebras, at least one of the summands being isomorphic to $\Cset$.

\subsection{Connectedness and irreducibility of complex algebraic groups}

If $P_0 \neq P_1$ are points in a compact Hausdorff space $X$, choose disjoint open neighbourhoods $U_{i}\ni P_i$. By Urysohn's Lemma, one finds continuous functions $f_i: X \to \Rset$ whose value is $1$ on $P_i$ and vanish on the complement of $U_{i}$. Then $f_0 \cdot f_1 = 0$, yet neither factor is the constant zero function, thus showing that the $C^*$-algebra $C(X)$ is never a domain.

As a consequence, if $G\neq \{1\}$ is a compact Lie group, then $Q = C(G)$ has zero divisors. However, as soon as $G$ is connected, then $\Q_G$ is an integral domain. Indeed, $\Q_G$ can be understood as the coordinate ring of the affine complex algebraic group $\Galg = \Spec(\Q_G)$ admitting $G$
as a maximal compact subgroup. It is well known that an algebraic group is connected if and only if it is irreducible. Indeed, the intersection of irreducible components is singular, and the singular part of an algebraic group, which is a Zariski closed set, must be empty as it is invariant under all left multiplications. Irreducibility of $\Galg$ then translates into $\Q_G$ being an integral domain. 

The main goal of the present note is to show that a suitably generalized notion of irreducibility implies connectedness for all (even non-classical) compact quantum groups. More explicitly, we will show in Proposition \ref{irredimpliesconnected} below that a compact quantum group $G$ is connected as soon as the corresponding canonical Hopf $*$-algebra $\Q_G$ is a (possibly noncommutative) domain.

\section{Discrete groups}

\subsection{Abelian compact quantum groups}
An important class of non-classical examples is provided by {\em Abelian} compact quantum groups, which correspond to cocommutative instances of $C(G)$. If $\Gamma$ is a discrete (ordinary) group then the group $C^*$--algebra
$C^*(\Gamma)$, which is the completion of the group algebra ${\mathbb C}\Gamma$ in the maximal $C^*$--norm, becomes a compact quantum group with coproduct $\Delta(\gamma)=\gamma\otimes\gamma$, $\gamma\in \Gamma$. We may also consider the reduced $C^*$--completion $C^*_{\text{red}}(\Gamma)$ and still obtain a compact quantum group. These are cocommutative examples and every cocommutative compact quantum group can be obtained as the completion of ${\mathbb C}\Gamma$ with respect to some $C^*$--norm, which is bounded between the reduced and the maximal norm. 
 An Abelian compact quantum group $C^*(\Gamma$) is coamenable if and only if $\Gamma$ is amenable as a group.

The correspondence between Abelian CQG and discrete groups provides a bridge associating topological properties of compact quantum groups with structural aspects of discrete groups.

\subsection{Topology of CQGs and structure of discrete groups}

The notion of topological (Lebesgue) dimension of a compact topological group $G$ is related to the Gelfand-Kirillov dimension of $\Q_G$, which has been rephrased in representation theoretic terms in \cite{DPR}. This can be used to extend the concept of topological dimension to all (possibly non-classical) compact quantum groups. In the special case of an Abelian compact quantum group $G = C^* \Gamma$, the topological dimension of $G$ is only finite when $\Gamma$ is a group of polynomial growth, in which case it equals the growth degree. A celebrated result by Gromov characterizes all finitely generated groups of polynomial growth.
\begin{thm}[Gromov]
A finitely generated group has polynomial growth if and only if it is virtually nilpotent.
\end{thm}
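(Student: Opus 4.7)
The plan is to treat the two implications separately. For the direction ``virtually nilpotent $\Rightarrow$ polynomial growth'' I would use the Bass--Guivarc'h calculation: since growth is insensitive to passing to a finite-index subgroup, it suffices to treat the finitely generated nilpotent case. Using the lower central series $\Gamma=\Gamma_1\supseteq\Gamma_2\supseteq\cdots\supseteq\Gamma_{c+1}=\{1\}$ together with standard commutator-collection identities, one bounds the number of elements of word-length at most $N$ by a polynomial in $N$ of degree $\sum_{i\geq 1} i\cdot\mathrm{rank}(\Gamma_i/\Gamma_{i+1})$, the key inputs being Malcev coordinates and the fact that commutators shorten length faster than products.

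The opposite implication is the substantive content of Gromov's theorem. Following Gromov, I would form the sequence of pointed metric spaces $X_n=(\Gamma,\tfrac{1}{n}d,e)$, where $d$ is the word metric attached to a finite symmetric generating set. Polynomial growth yields a uniform doubling condition, so Gromov's precompactness theorem supplies a pointed Gromov--Hausdorff limit $X_\infty$ that is complete, locally compact, connected, homogeneous, and of finite Hausdorff dimension. The solution of Hilbert's fifth problem (Gleason--Montgomery--Zippin) then upgrades the isometry group of $X_\infty$ to a finite-dimensional Lie group $L$ acting transitively. A limiting argument using approximate homomorphisms produces, after possibly passing to a finite-index subgroup $\Gamma'$ of $\Gamma$, a continuous-image homomorphism $\rho\colon\Gamma'\to L$ with infinite image. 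Combining this with Jordan's theorem on finite subgroups of Lie groups and the Milnor--Wolf theorem that a solvable group of polynomial growth is already virtually nilpotent, one inducts on the degree of polynomial growth to conclude that $\Gamma$ itself is virtually nilpotent.

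The central obstacle is, squarely, the construction and analysis of $X_\infty$: one must show that the asymptotic cone is non-trivial, finite-dimensional, and carries enough isometries to make its isometry group locally compact and non-discrete, so that the Hilbert fifth-problem machinery can be brought to bear. The subsequent extraction of an honest Lie-group representation of $\Gamma$ from an action by isometries on a non-smooth metric space is similarly delicate. Alternative routes due to Kleiner and to Shalom--Tao bypass Hilbert's fifth problem altogether by working with spaces of harmonic functions of polynomial growth on $\Gamma$, but any approach must still perform the essential conversion from the analytic hypothesis of polynomial growth to the rigid algebraic conclusion of virtual nilpotency.
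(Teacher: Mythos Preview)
Your outline is a faithful sketch of Gromov's original argument (and of the later Kleiner and Shalom--Tao approaches), but there is nothing in the paper to compare it against: the theorem is quoted as a background result attributed to Gromov, with no proof supplied. The authors invoke it only to observe that a finitely generated group of polynomial growth is virtually nilpotent, hence polycyclic-by-finite, so that Kaplansky's conjecture applies. Since the paper offers no argument of its own here, the appropriate comparison is simply that your proposal goes well beyond what the paper attempts; a one-line citation to Gromov would match the paper's treatment.
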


When $G = C^* \Gamma$ is a connected Abelian CQG, irreducible representations of $G$ are in one-to-one correspondence with elements of $\Gamma$; then tensor product of irreducible representations is given by the group multiplication, and the dual by taking the inverse element. We have seen above that connectedness of a CQG can be rephrased in terms of the absence of torsion representations; indeed, $G = C^* \Gamma$ is connected if and only if $\Gamma$ is torsionless. A long-standing conjecture for group algebras of discrete groups is the following:
\begin{conj}[Kaplansky]\label{Kaplansky}
Let $\kk$ be a field. A discrete group $\Gamma$ is torsionless if and only if its group algebra $\kk \Gamma$ has no zero-divisors.
\end{conj}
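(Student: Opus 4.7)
The statement splits into two implications of very different character, and I would treat them separately. The easy direction is that a group with torsion cannot have a domain as its group algebra: if $g \in \Gamma$ satisfies $g^n = 1$ with $n > 1$ minimal, then
\[
(1 - g)(1 + g + \cdots + g^{n-1}) = 1 - g^n = 0
\]
with both factors nonzero in $\kk\Gamma$. This disposes of one direction in a single line and would simply be invoked.

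The reverse direction --- torsion-freeness of $\Gamma$ implies $\kk\Gamma$ is a domain --- is the celebrated zero-divisor conjecture of Kaplansky, and has been open for decades. The natural plan is to route the argument through the \emph{unique product property}: $\Gamma$ has this property when, for every pair of nonempty finite subsets $A, B \subseteq \Gamma$, there exists some $x \in \Gamma$ admitting a unique factorization $x = ab$ with $a \in A$, $b \in B$. If $\Gamma$ has the unique product property then, for $\alpha = \sum_a \lambda_a a$ and $\beta = \sum_b \mu_b b$ nonzero in $\kk \Gamma$, the coefficient of such an $x$ in $\alpha \beta$ equals $\lambda_a \mu_b \neq 0$, so $\alpha \beta \neq 0$. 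The remaining task would be to show that every torsion-free group has the unique product property, and this is indeed known for wide classes: bi-orderable groups (using the order to single out the maximal product in $AB$), torsion-free nilpotent groups, right-orderable groups and, by closure under extensions and directed unions, torsion-free elementary amenable groups.

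The main obstacle, and the reason I cannot carry this plan through to its end, is that the proposed reduction is itself false: Rips and Segev produced torsion-free groups that do \emph{not} have the unique product property, and further examples have since been exhibited. For such groups, neither Kaplansky's conjecture nor its negation is known, and the combinatorial and order-theoretic techniques that succeed in the orderable or amenable worlds provide no uniform handle. Completing the proof would demand a genuinely new mechanism tying absence of torsion in $\Gamma$ to the multiplicative structure of $\kk \Gamma$ --- something more robust than singling out a distinguished element of a finite product set --- and identifying such a mechanism is precisely the crux that has kept the conjecture open. This is also why the authors present it as a conjecture rather than a theorem, and why the converse of Proposition \ref{irredimpliesconnected} is only reasonably hoped for, not proved.
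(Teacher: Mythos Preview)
Your treatment is entirely appropriate, and there is nothing to compare it against: the paper states Kaplansky's assertion as a \emph{conjecture} and offers no proof. It merely remarks that the conjecture is known for polycyclic-by-finite groups, which is the special case needed later in Section~5. Your handling of the easy direction is correct, and your explanation of why the hard direction remains open --- including the failure of the unique-product-property approach due to the Rips--Segev examples --- is accurate and more informative than anything the paper itself says on the matter.
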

Kaplansky's conjecture is known to hold for polycyclic-by-finite groups. As virtually nilpotent groups are of this type, Kaplansky's conjecture holds for finitely generated groups of polynomial growth. Notice that polycyclic-by-finite groups are the only groups known to yield a Noetherian group algebra.

In a similar---albeit more analytical---fashion, the so-called Kadison-Kaplansky conjecture states that the reduced $C^*$-algebra $C^*_r(\Gamma)$
has no non-trivial projections if $\Gamma$ is a torsion-free discrete group. The conjecture has been proved true for  word-hyperbolic groups \cite{Puschnigg}.

\section{Irreducibility implies connectedness}

\begin{defn}
Let $G$ be a compact quantum group. Then
\begin{itemize}
\item $G$ is {\em irreducible} iff $\Q_G$ is a domain;
\item $G$ is {\em weakly irreducible} iff $\Rep G$ is a domain;
\item $G$ is {\em connected} iff it has no nontrivial torsion irreducible representation.
\end{itemize}
\end{defn}

\begin{ex}
Let $G$ be a semisimple compact Lie group. We have seen above that $G$ can be viewed as maximal compact subgroup of the semisimple complex algebraic group $\Galg = \Spec \Q_G$. Representations of $\Galg$ restrict to representations of $G$, and irreducibility, tensor product, direct sums are preserved.

If we denote by $r$ the rank of $\Galg$ and by $u_1, \dots, u_r$ its irreducible fundamental representations, then mapping $x_i$ to $u_i$ provides an isomorphism $\Zset[x_1, \dots, x_r] \simeq \Rep \Galg = \Rep G$, thus showing that $\Rep G$ is a commutative domain, hence $G$ is a weakly irreducible CQG.
\end{ex}

Connectedness,  irreducibility and
weak irreducibility are all preserved under taking inverse limits:
\begin{lemma}\label{limit}
Connectedness, irreducibility and weak irreducibility are preserved by inverse limits of CQGs.
\end{lemma}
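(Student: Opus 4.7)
The plan is to translate the inverse limit of CQGs into the direct (filtered colimit) picture on the Hopf $*$-algebra side. Concretely, if $G = \varprojlim_\alpha G_\alpha$ is an inverse limit of CQGs over a directed set, then the transition morphisms $G \to G_\alpha$ and $G_\alpha \to G_\beta$ (for $\alpha \geq \beta$) dualize to Hopf $*$-algebra inclusions $\Q_{G_\beta} \hookrightarrow \Q_{G_\alpha} \hookrightarrow \Q_G$, and $\Q_G$ is the union $\bigcup_\alpha \Q_{G_\alpha}$. Because every finite-dimensional representation of $G$ has matrix coefficients in some $\Q_{G_\alpha}$, it is in fact pulled back from $G_\alpha$; consequently the fusion ring also assembles as a filtered union $\Rep G = \bigcup_\alpha \Rep G_\alpha$, compatibly with tensor product and involution.

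With this picture in place, the three statements follow uniformly from the fact that all the relevant conditions involve only finitely many elements and are hence detected at a finite stage. For irreducibility, a pair of zero divisors $a, b \in \Q_G$ with $ab = 0$ and $a, b \neq 0$ would, by directedness, lie in a common $\Q_{G_\alpha}$, contradicting the assumption that $G_\alpha$ is irreducible. The argument for weak irreducibility is verbatim the same with $\Q_G$ replaced by $\Rep G$, using the filtered colimit description of the fusion ring.

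For connectedness, suppose $u$ is a nontrivial torsion irreducible representation of $G$. Then $u$ is defined over some $\Q_{G_\alpha}$, and the decomposition into irreducibles of any tensor power of $u \oplus u^*$ is intrinsic to the underlying $C^*$-tensor category; in particular it is the same whether computed inside $\Rep G_\alpha$ or inside $\Rep G$. Hence $u$ already realizes its full collection of irreducible summands inside $\Rep G_\alpha$, so $u$ is a nontrivial torsion representation of $G_\alpha$, contradicting connectedness of $G_\alpha$.

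The main point to verify carefully is the initial structural claim that an inverse limit of CQGs is computed, at the level of the canonical Hopf $*$-algebras, as the filtered union of the $\Q_{G_\alpha}$ (with the CQG being recovered as an appropriate $C^*$-completion). Once this identification is granted, the rest is essentially finite-support bookkeeping; the only subtlety is to make sure that the tensor decomposition used in the torsion argument does not depend on which ambient CQG one works in, which is immediate from functoriality of the representation category under Hopf $*$-algebra inclusions.
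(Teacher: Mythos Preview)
Your proof is correct and follows essentially the same approach as the paper: dualize the inverse limit to a filtered union $\Q_G = \bigcup_\alpha \Q_{G_\alpha}$, then observe that each of the three properties is witnessed by finitely many elements and hence descends to some stage. The only cosmetic difference is in the connectedness step: the paper invokes Wang's original definition and argues that a nontrivial finite-dimensional Hopf $*$-subalgebra of $\Q_G$ would already sit inside some $\Q_{G_\alpha}$, whereas you use the equivalent torsion-representation characterization. The paper's route is marginally slicker here, since a finite-dimensional subalgebra is a single finite object and descends immediately, while your version has to check that the torsion condition on $u$ (which a priori quantifies over all tensor powers) is preserved under the inclusion $\Rep G_\alpha \hookrightarrow \Rep G$; you handle this correctly by noting that decompositions into irreducibles are intrinsic, but it is an extra sentence of justification that the paper's formulation avoids.
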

\begin{proof}
If $G$ is the inverse limit of its quotients $G_i$, then $\Q_G$ is by definition the direct limit of $\Q_{G_i}$. 
If now each $G_i$ is connected then no finite-dimensional Hopf subalgebra can be contained in any of the Hopf algebras $\Q_{G_i}$, hence
no finite-dimensional Hopf subalgebra can be contained in $\Q_G$ either. The conclusion is now easily reached, for
any finite-dimensional Hopf subalgebra of the $C^*$-algebra $Q$ is actually contained in $\Q_G$.  

As for irreducibility, if all $\Q_{G_i}$ are domains, then also $\Q_G$ is a domain, as every pair of nonzero elements $x, y\in \Q_G$ satisying $xy = 0$ must be contained in $\Q_{G_i}$ for some $i \in I$. The same argument holds for $\Rep G$.
\end{proof}

\begin{prop}\label{domainimpliesdomain}
Let $G$ be a compact quantum group. If $\Q_G$ is a domain, then $\Rep G$ is a domain.
\end{prop}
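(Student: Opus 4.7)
The proof plan rests on reinterpreting the statement in light of the character embedding already described in the character theory subsection. Recall the two facts recorded there: first, the assignment of a character to each representation class sets up a ring homomorphism $\chi: \Rep G \to \mathcal{Q}_G$, with $\chi(u \otimes v) = \chi(u)\chi(v)$ and $\chi(u \oplus v) = \chi(u) + \chi(v)$. Second, the characters $\{\chi^i, i \in I\}$ of a complete family of pairwise inequivalent irreducibles satisfy the orthonormality relations $h(\chi^i(\chi^j)^*) = \delta^{ij}$ with respect to the Haar state, and the Haar state is faithful on $\mathcal{Q}_G$. These two ingredients together force $\chi$ to be injective, as the $\chi^i$ form a $\Zset$-basis of $\Rep G$ and a $\Cset$-linearly independent family inside $\mathcal{Q}_G$.

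Granted that $\chi: \Rep G \to \mathcal{Q}_G$ is an injective ring homomorphism, the proof is then a one-liner: $\Rep G$ is isomorphic to the subring $\chi(\Rep G) \subseteq \mathcal{Q}_G$, and a subring of an integral domain is itself an integral domain. Thus the hypothesis that $\mathcal{Q}_G$ be a (possibly noncommutative) domain is directly inherited by $\Rep G$.

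There is no genuine obstacle in this argument; everything needed has essentially been set up in the preceding sections. If I were writing the proof with maximum care, the only item I would double-check is that the $\chi^i$ really do give a $\Zset$-basis of $\Rep G$ (which follows from complete reducibility) and that the orthonormality relations combined with faithfulness of $h$ genuinely imply $\Cset$-linear independence of $\{\chi^i\}$ inside $\mathcal{Q}_G$; both are standard Peter--Weyl style observations and are explicitly asserted in the excerpt.
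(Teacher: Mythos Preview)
Your proposal is correct and follows exactly the same approach as the paper: the paper's proof simply observes that $\chi: \Rep G \to \Q_G$ is an injective ring homomorphism, so $\Rep G \simeq \chi(\Rep G) \subset \Q_G$ inherits the domain property. Your additional remarks on why $\chi$ is injective merely spell out what the paper already recorded in the character theory subsection.
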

\begin{proof}
The character function $\chi: \Rep G \to \Q_G$ is an injective ring homomorphism. Then if $\Q_G$ is a domain, $\Rep G \simeq \chi(\Rep G) \subset \Q_G$ must be a domain too.
\end{proof}

\begin{prop}\label{quasiimpliesconnected}
Let $G$ be a compact quantum group. If $\Rep G$ is a domain, then $G$ is connected.
\end{prop}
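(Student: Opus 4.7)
The plan is to argue by contrapositive: assuming $G$ is not connected, I will construct explicit zero-divisors in $\Rep G$. By the characterization of connectedness recalled earlier, non-connectedness provides a nontrivial torsion irreducible representation $u$, so the set $S = \{[\pi_0] = [1], [\pi_1], \ldots, [\pi_k]\}$ of isomorphism classes of irreducible summands of $\bigoplus_{n \ge 0}(u \oplus u^*)^{\otimes n}$ is finite. By construction $S$ is closed under tensor product and conjugation, so the $\Zset$-subgroup $R \subseteq \Rep G$ it spans is a unital $*$-subring on which the dimension map $d([\pi_i]) := \dim \pi_i$ restricts to a ring homomorphism.

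The key step is to introduce the ``regular'' element
\[
\rho := \sum_{i=0}^{k}(\dim \pi_i)\,[\pi_i] \in R
\]
and to show that $\rho \cdot [\pi_j] = (\dim \pi_j)\,\rho$ for every $j$. Once this is granted, for any $j \neq 0$ the virtual representation $[\pi_j] - (\dim \pi_j)[1]$ is nonzero in $\Rep G$ by linear independence of the classes of distinct irreducibles, $\rho$ is also nonzero, and $\rho \cdot ([\pi_j] - (\dim \pi_j)[1]) = 0$ supplies the desired zero-divisor pair in $\Rep G$.

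To establish the identity, I will expand $\rho \cdot [\pi_j]$ in the basis $S$ and compute the coefficient of each $[\pi_l]$ to be $\sum_{i}(\dim \pi_i)\,\dim \operatorname{Hom}(\pi_l, \pi_i \otimes \pi_j)$. Frobenius reciprocity in the rigid $C^*$-tensor category of representations of $G$ rewrites this as $\sum_i (\dim \pi_i)\,\dim \operatorname{Hom}(\pi_l \otimes \pi_j^*, \pi_i)$; since $S$ is closed under tensor product and conjugation, the decomposition of $\pi_l \otimes \pi_j^*$ is supported on $S$, and the sum collapses to $\dim(\pi_l \otimes \pi_j^*) = (\dim \pi_l)(\dim \pi_j)$, which matches the coefficient of $[\pi_l]$ in $(\dim \pi_j)\,\rho$. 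The main obstacle is precisely this Frobenius reciprocity manipulation; conceptually it is the fusion-ring shadow of the familiar absorption property of the regular representation of the finite quantum quotient of $G$ cut out by the torsion orbit $S$, and once it is in place the production of zero divisors is automatic.
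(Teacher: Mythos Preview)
Your argument is correct, and it reaches the same conclusion by a genuinely different route from the paper's own proof.

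The paper also argues by contrapositive and also works inside the finite-rank $\Zset$-subring $A\subset\Rep G$ spanned by the torsion orbit, but instead of building the regular element it invokes a minimal-polynomial argument: if $A$ were a domain, the monic minimal polynomial $p(x)\in\Zset[x]$ of a nontrivial irreducible $u\in A$ would have to be irreducible of degree $>1$, hence have no integer root; yet the ring homomorphism $\dim:\Rep G\to\Zset$ forces $p(\dim u)=0$, a contradiction. Unwinding this, the hidden zero divisors are $u-(\dim u)\cdot 1$ and the cofactor $q(u)$ in the factorization $p(x)=(x-\dim u)q(x)$. Your proof makes the first factor equally explicit but replaces the abstract cofactor by the concrete regular element $\rho=\sum_i(\dim\pi_i)[\pi_i]$, and verifies the absorption identity $\rho\cdot[\pi_j]=(\dim\pi_j)\rho$ directly via Frobenius reciprocity. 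The paper's approach is shorter and needs no categorical input beyond the existence of $\dim$; yours is more constructive, names the zero divisors without any case analysis, and makes visible that the mechanism is exactly the absorbing property of the regular representation of the finite quantum quotient.
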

\begin{proof}
If $G$ is not connected, then it has a finite (i.e., finite dimensional as a $\Cset$-vector space) quotient $H$. Then the fusion ring $A: = \Rep H$, which sits inside $R: = \Rep G$, has finitely many irreducibles and is closed under direct sum, tensor product and subobjects. The ring $A$ is a finitely generated free $\Zset$-module, as the finitely many irreducible representations of $H$ linearly span it over $\Zset$, so that each of its elements admits a unique (monic) minimal polynomial in $\Zset[x]$.

Assume $R$, hence $A$, to be a domain. If $u$ is a nontrivial irreducible representation of $H$, its minimal polynomial $p(x) \in \Zset[x]$ is irreducible of degree $> 1$, hence it certainly has no roots in $\Zset$. However, the dimension function $\dim: R \to \Zset$ is a ring homomorphism, so that $\dim u \in \Zset$ is a root of $p$, thus yielding a contradiction.
\end{proof}

\begin{prop}\label{irredimpliesconnected}
Let $G$ be a compact quantum group. If $\Q_G$ is a domain, then $G$ is connected.
\end{prop}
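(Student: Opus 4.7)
The plan is to simply chain the two preceding propositions. First I would invoke Proposition \ref{domainimpliesdomain} to deduce from the assumption that $\Q_G$ is a domain that the fusion ring $\Rep G$ is also a domain; recall that this step rests on the fact that the character map $\chi : \Rep G \to \Q_G$ is an injective ring homomorphism, which in turn follows from the orthonormality relations $h(\chi^i (\chi^j)^*) = \delta^{ij}$. Then I would apply Proposition \ref{quasiimpliesconnected} to conclude that $G$ is connected.

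A slightly more direct route is also available and would avoid the detour through $\Rep G$. Suppose, towards a contradiction, that $G$ is not connected, so that $Q$ contains a nontrivial finite-dimensional unital Hopf $*$-subalgebra $A$. As already observed in the proof of Lemma \ref{limit}, one automatically has $A \subset \Q_G$. Now $A$ is a finite-dimensional $C^*$-algebra, hence semisimple, and therefore splits as a direct sum of full matrix algebras; the restriction of the counit forces one of these summands to be a copy of $\Cset$. Nontriviality of $A$ guarantees at least one further summand, and the corresponding minimal central idempotents are then nonzero orthogonal elements of $A$ whose product vanishes, producing zero divisors inside $\Q_G$ and contradicting the hypothesis.

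I do not expect any real obstacle, since the substantive work has already been carried out in Propositions \ref{domainimpliesdomain} and \ref{quasiimpliesconnected}; the statement is essentially a corollary of the two. The merit of the chaining presentation is conceptual rather than technical: it displays irreducibility, weak irreducibility, and connectedness as a descending chain of conditions, naturally framing the subsequent discussion of whether the reverse implications may hold or fail and its connection with Kaplansky's conjecture.
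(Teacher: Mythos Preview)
Your proposal is correct and matches the paper's own proof almost exactly: the paper first notes that the statement follows from Propositions \ref{domainimpliesdomain} and \ref{quasiimpliesconnected}, and then gives the same direct argument via the semisimple finite-dimensional Hopf $*$-subalgebra $\Q_H \subset \Q_G$ decomposing as a nontrivial direct sum of matrix algebras, which cannot be a domain. Your additional remarks on the role of the counit summand and the orthogonal central idempotents simply make explicit what the paper leaves implicit.
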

\begin{proof}
This follows from Propositions \ref{domainimpliesdomain} and \ref{quasiimpliesconnected}, but we also provide a direct proof of a somewhat different flavour.
If $G$ is not connected, then it has a nontrivial finite quotient quantum group $H$. Then $\Q_H \subset \Q_G$ is a semisimple $C^*$-algebra, which is a direct sum of matrix algebras. However, a nontrivial direct sum of matrix algebras is never a domain.
\end{proof}
Notice that when $G$ is a compact topological group which is not connected, then above proofs locates a finite-dimensional $*$-algebra of the commutative algebra $C(G)$, thus yielding a decomposition of $1 \in C(G)$ into a sum of orthogonal central idempotents. This also forces $C(G)$ 
to decompose into a direct sum of finitely many ideals. This fact is likely not to hold in general, and seems to require $G$ to be either classical or finite.
We may summarize Propositions \ref{domainimpliesdomain}-\ref{irredimpliesconnected} in the following
\begin{thm}
Let $G$ be a compact quantum group. Then
\[G \mbox{ irreducible } \implies G \mbox{ weakly irreducible } \implies G \mbox{ connected}.\]
\end{thm}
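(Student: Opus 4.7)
The plan is to recognize the theorem as a chain of two implications, each of which has been established as a separate proposition earlier, so my task reduces to assembling them.

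For the first arrow, $G$ irreducible means $\Q_G$ is a domain by definition. Proposition \ref{domainimpliesdomain} then tells me that $\Rep G$ is also a domain, via the injective ring homomorphism $\chi: \Rep G \to \Q_G$; this is precisely weak irreducibility of $G$.

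For the second arrow, weak irreducibility gives that $\Rep G$ is a domain, and Proposition \ref{quasiimpliesconnected} then yields connectedness through a minimal polynomial argument: were $G$ disconnected, a nontrivial finite quotient $H$ would embed its fusion ring $\Rep H$ as a $\Zset$-subalgebra of $\Rep G$ that is free of finite rank, so each of its elements would admit a monic minimal polynomial over $\Zset$. For a nontrivial irreducible representation of $H$ this polynomial would be irreducible of degree $>1$ with no integer root, yet the ring homomorphism $\dim: \Rep G \to \Zset$ would still evaluate the irreducible element to an integer root of that polynomial, a contradiction.

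The main obstacle in the whole development is not in this wrap-up statement, which is a formal chain, but in Proposition \ref{quasiimpliesconnected}: it is there that one must translate the purely algebraic absence of zero divisors in $\Rep G$ into the representation-theoretic absence of torsion irreducibles. Once that minimal polynomial trick is in hand, the present theorem follows simply by concatenating it with the character embedding of Proposition \ref{domainimpliesdomain}.
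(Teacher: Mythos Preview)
Your proposal is correct and matches the paper exactly: the theorem is stated there merely as a summary of Propositions \ref{domainimpliesdomain} and \ref{quasiimpliesconnected} (with the direct variant Proposition \ref{irredimpliesconnected} also mentioned), and your write-up simply invokes those two propositions in sequence. There is nothing to add.
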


\section{Does connectedness imply irreducibility?}

It is well known that connectedness is equivalent to irreducibility as soon as $G$ is a compact Lie group, hence for all {\em classical} (i.e., commutative) CQGs by Lemma \ref{limit}. In the compact Lie case, $\Q_G$ is the complex algebra of regular functions of the affine group corresponding to $G$, which is irreducible as soon as $G$ is connected.

In the non-classical case, proving that connectedness implies irreducibility certainly constitutes a difficult problem, already when $G$ is Abelian. Indeed, we have seen that when $G=C^* \Gamma$, then $\Cset\Gamma$ being a domain only forces connectedness of $G$ if Conjecture \ref{Kaplansky} holds. The general statement is thus at least as difficult as proving Kaplansky's conjecture.

However, connectedness and irreducibility are equivalent also when $G$ is a cocommutative compact matrix quantum group $C^* \Gamma$ of finite topological dimension \cite{DPR}, as the polynomial growth requirement forces $\Gamma$ to be virtually nilpotent, whence polycyclic-by-finite, and we have seen above that Kaplansky's conjecture is known to hold for such groups. As having finite topological dimension commutes with inverse limits, Lemma \ref{limit} shows that equivalence of connectedness and irreducibility holds for every Abelian (i.e., cocommutative) compact quantum group of finite topological dimension.

\vfill

\end{document}